\theoremstyle{plain}
\newtheorem{teo}{Theorem}[section]
\newtheorem{lema}[teo]{Lemma}
\newtheorem{prop}[teo]{Proposition}
\newtheorem{crl}[teo]{Corolary}
\theoremstyle{definition}
\newtheorem{defn}[teo]{Definition}
\newtheorem{eje}{Example}
\newtheorem{prob}{Problem}
\begin{document}
\title{Gammoids, Pseudomodularity and Flatness Degree}
\author{Jorge Alberto Olarte\thanks{Universidad de los Andes, Bogot\'a, Colombia. ja.olarte1299@uniandes.edu.co \newline The author would like to thank Alf Onshuus, without his valuable comments and guidance this article would not have been possible.}}

\maketitle

\begin{abstract}
We study weaker variations of the property of flatness in matroid theory. We show that these variations form a chain of increasingly stronger properties all implying pseudomodularity on its lattice of flats. We show examples in the gammoid class that show that these properties are in fact different.
\end{abstract}

\section{Introduction}

Hrushovski introduces the concept of a matroid being \emph{flat} in \cite{hrushovski1993new} in order to prove the existence of a non trivial strongly minimal set that does not interpret an infinite field. In \cite{evans2011matroid}, Evans shows that for finite matroids the notion of flatness characterizes the matroids known as \emph{strict gammoids} that were first studied by Mason \cite{mason1972class}. Strict gammoids are also known as \emph{cotransversals}, as Ingleton and Piff showed that they are precisely the duals of transversal matroids \cite{ingleton1973gammoids}. Actually, in \cite{mason1971characterization} Mason gives the exact dual analog of flatness as a characterization of transversal matroids. The restrictions of a strict gammoid, are known as simply \emph{gammoids} and they form a complete class of matroids that has been widely studied \cite{brualdi1987transversal} \cite{ingleton1977transversal} \cite{oxley1992matroid}.

In \cite{evans2011matroid}, Evans shows that strict gammoids have a \emph{pseudomodular} lattice of flats. The notion of pseudomodularity was first studied by Dress and Lov\'asz in \cite{dress1987some} as a necessary condition for full algebraic matroids and was formally defined in \cite{bjorner1987pseudomodular} by Björner and Lov\'asz. Evans asks whether a gammoid which is pseudomodular is necessarily a strict gammoid. We answer negatively by showing that a strictly weaker condition than flatness, having \emph{flatness degree} at least 3 is enough for pseudomodularity. We then construct a gammoid showing that the converse is not true, that is being pseudomodular and having flatness degree 2. For each possible flatness degree we construct a gammoid having such flatness degree. All of these are counter-examples to Evans' question. We get an infinite chain of increasingly stronger properties, from pseudomodularity to flatness, all of them speaking in terms of the lattice of flats.

We provide basic knowledge about matroids and the gammoid class in section 2. We define and discuss pseudomodularity in section 3 and we connect it with the notion of flatness. In section 4 we define the flatness degree and construct examples of gammoids that attain each of the possible values for flatness degree. Finally we propose some problems in section 5.

\section{Matroid theory background}

We do not assume any previous knowledge of matroid theory, so we review the basic concepts in this section. Proofs and further insight can be found in \cite{oxley1992matroid} and, for the specific gammoid class, in  \cite{brualdi1987transversal} and \cite{brualdi1987introduction}. Throughout the paper we consider only finite matroids although the results can be extended to any matroid with finite rank unless it is specifically stated that the matroid must be finite. A \emph{matroid} $M$ consists of a (finite) set $N$ and a function $r : \mathcal{P}(N) \rightarrow \mathbb{Z}$ called rank that satisfy the following conditions

\begin{enumerate}[(R1)]
	\item If $A \subseteq N$ then $0 \leq r(A) \leq |A|$
	\item If $A \subseteq B \subseteq N$ then $r(A) \leq r(B)$
	\item If $A \subseteq N$ and $B \subseteq N$ then $r(A) + r(B) \geq r(A\cup B) + r(A\cap B)$
\end{enumerate}

The last condition is called \emph{submodularity}.  The rank of a matroid is $r(N)$. A subset $I$ of $N$ is called \emph{independent} if $r(I) = |I|$. A subset $B$ of $N$ is called a \emph{basis} if it is independent and $r(B) = r(N)$. A subset $C$ of $N$ is a \emph{circuit} if it is a minimal dependent set. That is, for every $n \in C$, $r(C) = r(C\setminus\{e\}) = |C|-1$. A subset $F$ of $N$ is called flat if for every $n \in N \setminus F$ we have $r(F\cup \{n\}) > r(F)$. We will write $\mathcal{F} = \mathcal{F}(M)$ as the set of flats of the matroid $M$. When the equality in (R3) is met whenever $A$ and $B$ are flat, we say that $M$ is \emph{modular}. We can define the closure operator $cl : \mathcal{P}(N) \rightarrow \mathcal{P}(N)$ as $cl(A) := \min (\{F \in \mathcal{F} \enspace | \enspace A \subseteq F \})$ or, equivalently $cl(A) = max(\{F \subseteq N \enspace | \enspace r(A) = r(F) \})$. 

\let\thefootnote\relax\footnote{We chose to use the classical matroid notation. Sometimes, the rank function is called \emph{dimension} and noted by $d$ and flats are called \emph{closed} sets. The use of term flat for closed sets may cause confusion with the property for matroids also called flat. That is why we choose to refer to this property as a matroid being \emph{totally flat}. We hope that the difference between the object flat and any property with flat in its name is expressed clearly enough.}

Note that the rank of $A$ tells the maximum cardinality of an independent subset of $A$. Then the matroid is determined by the set of independent subsets of $N$. Also, the rank of a flat $F$ is the length of a maximal chain of flats such that $F_0 \subset  F_1 \subset \dots \subset F_{r(F)} = F$. As the rank of any set is the rank of its closure, which is a flat, the matroid is also determined by the set of flats. Actually matroids can be criptomorphically defined by all of the objects defined above. We choose to use the rank function for the definition, because of the importance of submodularity in this paper. So sometimes we will refer to a matroid by giving its set of independent sets or the set of flats rather than giving the duple $(N, r)$.

Given a matroid $M$ on the set $N$, we can define naturally matroids on subsets of $N$. More precisely, for $A \subseteq N$, we define the \emph{restriction} of $A$ as the matroid $M\setminus A$ on the set set $N\setminus A$ with rank function $r_{M\setminus A}$ as the rank function restricted to $\mathcal{P}(N\setminus A)$. We can also define the \emph{contraction} of $A$ as the matroid $M/A$ which also has set $N\setminus A$ but with rank function $r_{M\setminus A}(B) := r(B\cup A) - r(A)$ for any $X \subseteq N \setminus A$. The operations of restricting and contracting a matroid commute, this is $(M \setminus A) /B = (M/B)\setminus A$. Any matroid constructed this way is called a \emph{minor} of $M$.

Finally we can define the \emph{dual} $M^*$ of $M$, acting on the same set $N$ but with rank function $r^*(A) := |A|+r(N\setminus A) -r(N)$. It can be easily verified that the bases of $M^*$ are precisely the complement of the bases of $M$. Thus, $(M^*)^* = M$ and hence the term dual. It can be shown that if $A \subseteq N$ then $(M\setminus A)^* = M^*/A$. Note that this implies $M^*\setminus A = (M/A)^*$. So in this sense we can say that the restriction and the contraction are dual. There are many more objects that can be said to be dual in some sense. An element $n \in N$ is said to be a \emph{loop} if $r(n) = 0$. An element $n \in N$ is a \emph{coloop} if it is a loop in $M^*$ or, equivalently, if for all $A \subseteq N$ $A$ is independent if and only if $A\cup\{n\}$ is independent. A subset $S$ of $N$ is called \emph{cyclic} if it does not for every $n \in S$, $r(S\setminus \{n\}) = r(S)$. In other words, $S$ is cyclic if $M \setminus (N \setminus S)$ does not have coloops. Note that all circuits are cyclic. It is easy to verify that $S$ is cyclic if and only if $N\setminus S$ is a flat.

We now turn our attention to the class of gammoids, starting with transversal matroids. Given $\mathcal{A} = (A_1, \dots ,A_k)$ subsets of $N$, and a subset $I$ of $N$, a \textit{matching} of $I$ is an injective function from $I$ to $\mathcal{A}$ such that for every $n \in I$, we have $n \in f(n)$. The set of subsets of $N$ that have a matching form the independent sets of a matroid $M$. The set $\mathcal{A}$ is called a \textit{presentation} of $M$. A matroid that can be constructed this way is called \emph{transversal}. Different presentations may produce the same matroid. It is obvious from the definition that the rank of $M$ is at most $k$, however, one can always give presentations such that $k$ is exactly the rank of $M$.

Now let $\Gamma = (N, E)$ be a directed graph where $N$ is the set of vertices and $E$ is the set of edges. Given $u, v \in N$, A \emph{path} $P$ from $u$ to $v$ is a sequence of vertices $(n_1, \dots, n_t)$ such that $n_1 = u$, $n_t = v$ and for every $i \leq t-1$, $(n_i, n_{i+1}) \in E$. Given two sets $A, B \subseteq E$, a \emph{linking} $\Theta$ from $A$ to $B$ is a collection of paths such that

\begin{itemize}
	\item $|\Theta| = |A|$
	\item The paths are pairwise disjoint
	\item Each path starts in a vertex belonging to $A$ and ends in a vertex of $B$
\end{itemize}

Fixing a sets $N_1, N_2 \subseteq N$, the sets $I \subseteq N_1$ such that there is a linking from $I$ to $N_2$ are the independent sets of a matroid $M$ on the set $N_1$. A matroid that can be constructed this way is called a \emph{gammoid}. If $N_1 = N$ we say it is a \textit{strict gammoid}. A gammoid can have different directed graphs representing it.

The gammoid class closed under minors and duals. Strict gammoids and transversal matroids are known to be dual \cite{ingleton1973gammoids}. It is clear from the definition that transversal matroids are closed under restriction, however they are not closed under contraction. Dually, strict gammoids are closed under contraction but not closed under restriction. Every gammoid is the restriction of a strict gammoid. Then every gammoid is the contraction of a transversal matroid. 

\begin{eje}
\label{matroidY}

\begin{figure}[h]
\centering	
	\caption{}
	\includegraphics{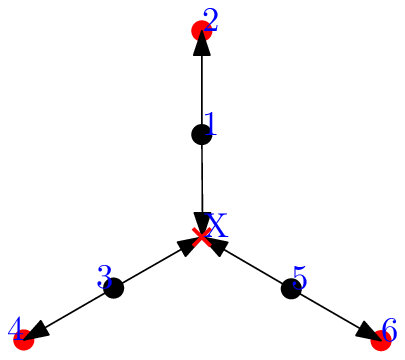}
	\label{fig:ej1}
	
\end{figure}

Let $\Gamma$ be the graph shown in Figure \ref{fig:ej1}. By letting $N_1 = \{1,2,3,4,5,6\}$ and $N_2 = \{2, 4, 6, x\}$  we get a gammoid $M$ of rank $4$. This gammoid is also a transversal matroid, as $\{\{1,2\}, \{3,4\},\{5,6\}, \{1,3,5\}\}$. is a presentation of $M$. However it will be shown in section $3$ that $M$ is not a strict gammoid (this does not follow from the fact that $x \notin N_1$, as there may be a different directed graph representing $M$ with all of its vertices).
\end{eje}

\section{3-Flatness and Pseudomodularity}

There have been many characterizations for transversal matroids and strict gammoids, most of them speaking in terms of flats and specially cyclic flats. However, the one we like the most for strict gammoids is Theorem \ref{charac}. Mason first showed the analogue for transversal matroids in \cite{mason1971characterization}. However, it is specifically stated and shown by Evans in \cite{evans2011matroid}. 

Let $M$ be a matroid. Recall that we refer as $\mathcal{F}$ as the set of flats of $M$. Let $\mathcal{C} = \{F_i \in \mathcal{F} \enspace | \enspace i \in I\}$ a collection of flats. If $\emptyset \neq S\subseteq I$ let $F_S= \bigcap_{i\in S}F_i$ and $F_\emptyset = \bigcup\limits_{i\in I} F_i$. We define the $\Delta$ function as
\begin{equation*}
\Delta (\mathcal{C}) = \sum_{S\subseteq I} (-1)^{|S|}r(F_s) 
\end{equation*}

For such $\mathcal{C}$ we want to admit repetitions even though they are trivial for calculating $\Delta(\mathcal{C})$, thus letting $\mathcal{C}$ be a multiset. We want to do this, because we are going to manipulate the elements in $\mathcal{C}$ and want to maintain the cardinality of $\mathcal{C}$ cardinality constant. The $\Delta$ function can help us define the concept of flatness given by Hrushovski in \cite{hrushovski1993new}.

\begin{defn}
We say $M$ is \emph{totally flat} if $\forall \mathcal{C} \subseteq \mathcal{F}$ finite, $\Delta (\mathcal{C}) \leq 0$
\end{defn}

\begin{teo}
\label{charac}
A finite matroid $M$ is a strict gammoid if and only if it is totally flat. 
\end{teo}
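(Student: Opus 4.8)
The plan is to prove both directions of the equivalence, with the bulk of the work in showing that a strict gammoid is totally flat; the converse I would obtain by a direct construction of a linking graph from the flat structure.

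For the forward direction, suppose $M$ is a strict gammoid, realized by a directed graph $\Gamma = (N,E)$ with sink set $N_2$ (so $N_1 = N$). The key tool is Menger's theorem / the max-flow--min-cut characterization of linkings: for any $A \subseteq N$, $r(A)$ equals the minimum size of a set of vertices separating $A$ from $N_2$ in $\Gamma$, and more precisely $r(A)$ is the maximum size of a linking from a subset of $A$ to $N_2$. I would first reduce to the case where every flat $F_i$ in the collection $\mathcal{C}$ is \emph{cyclic} (equivalently $N \setminus F_i$ is "saturated" in a suitable sense), or alternatively work directly with the cocircuit/cyclic-flat side where strict gammoids behave well under contraction. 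The heart of the argument is an inclusion–exclusion estimate: writing $r(F_S)$ via minimal separators and using submodularity repeatedly, one shows $\Delta(\mathcal{C}) = \sum_{S} (-1)^{|S|} r(F_S) \le 0$. Concretely I expect to induct on $|\mathcal{C}|$: peel off one flat $F_{i_0}$, split the alternating sum into terms containing $i_0$ and terms not containing it, and bound the "difference" block $\sum_{i_0 \in S}(-1)^{|S|}(r(F_S) - r(F_{S \setminus \{i_0\}} ))$ using the fact that in a strict gammoid the local rank drop along $F_{i_0}$ is controlled by disjoint paths that can be chosen coherently — this coherence is exactly what transversality of the dual provides. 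Equivalently, one may dualize at the outset and prove the dual statement for transversal matroids: that the dual inequality $\sum_S (-1)^{|S|} \mathrm{null}(\cdot) \ge 0$ holds for transversal matroids, which follows from Hall-type/defect arguments on the bipartite presentation. I would use whichever is cleaner, likely the dual transversal formulation, since Hall's theorem gives clean quantitative control.

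For the converse, assume $M$ is totally flat; I want to build a presentation of $M$ as a strict gammoid, i.e. a directed graph. The standard approach is to use the cyclic flats and the $\Delta$-inequality to verify that a candidate construction works. Take $N$ to be the ground set and define $N_2$ to be a basis $B$ of $M$; the idea is to introduce, for each non-basis element or each element of a carefully chosen family of cyclic flats, arcs encoding dependencies. More precisely, I would invoke the known construction (Mason / Ingleton–Piff style): a matroid is a strict gammoid iff one can choose for each cyclic flat $F$ a "witness" set so that a certain deficiency version of Hall's condition holds across all cyclic flats simultaneously, and the condition $\Delta(\mathcal{C}) \le 0$ for all finite $\mathcal{C}$ of cyclic flats is precisely the Hall-type feasibility condition (via a defect/deficiency form of Hall's theorem applied to the family $\{N \setminus F : F \in \mathcal{F}\}$ with weights given by corank). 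So the step is: translate "$\Delta(\mathcal{C}) \le 0$ for all $\mathcal{C}$" into the hypothesis of a deficiency version of Hall's marriage theorem for a bipartite-like incidence structure, extract a system of representatives, and read off the graph $\Gamma$ from that system.

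The main obstacle I anticipate is the forward direction's inclusion–exclusion bound: making the "coherent choice of disjoint paths across all the intersections $F_S$ simultaneously" precise is delicate, because naively applying submodularity term-by-term does not telescope — the alternating signs require a genuinely global argument (a routing/flow argument or a clean reduction to a Hall deficiency computation on the transversal dual). I would structure the proof to push all the combinatorial difficulty into a single lemma about linkings in $\Gamma$ (or a single deficiency-Hall computation on the transversal presentation of $M^*$), and then derive both directions of Theorem~\ref{charac} from that lemma together with the duality $(\text{strict gammoid})^* = (\text{transversal})$ already recorded in Section~2.
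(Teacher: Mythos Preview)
The paper does not prove Theorem~\ref{charac}; it is stated as a known result and attributed to Mason (for the dual transversal characterization in \cite{mason1971characterization}) and to Evans (for the strict-gammoid formulation in \cite{evans2011matroid}). There is therefore no proof in the paper to compare your proposal against.

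That said, your sketch is broadly aligned with the arguments in those references. Dualizing to transversal matroids and running a defect/deficiency form of Hall's theorem on the bipartite presentation is essentially Mason's original approach, and the reduction to cyclic flats that you invoke is standard (and is in fact proved later in the paper as Proposition~\ref{cyclic}). As a plan it is reasonable, but it remains a plan: the forward-direction inclusion--exclusion bound is gestured at rather than executed, and your own closing paragraph correctly identifies making the ``coherent disjoint paths'' argument precise as the real obstacle. If you want a self-contained proof, the route you flag at the end---prove the dual inequality for transversal matroids via the Ore/Hall defect formula, then transfer via Ingleton--Piff duality---is the cleanest, and is what the cited literature does.
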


Consider Example \ref{matroidY}. Let $\mathcal{C} = \{\{1,2,3,4\},\{1,2,5,6\},\{3,4,5,6\}\}$ we have $\Delta(\mathcal{C}) = 4-3-3-3+2+2+2=1$ so $M$ is not totally flat. Hence, it is not a strict gammoid. It is actually the minimal gammoid that is not strict gammoid, in terms of rank and size. As stated before, we want to keep track of the cardinality of $\mathcal{C}$ so we can study the following properties

\begin{defn}
A matroid $M$ is \emph{$n$-flat}, if and only if $\forall \mathcal{C} \subseteq \mathcal{F}$ such that $|\mathcal{C}| \leq n$, then $ \Delta(\mathcal{C}) \leq 0$.
\end{defn}

We used the term $n$-flat for $|\mathcal{C}| \leq n$ instead of $|\mathcal{C}| = n$ because once $\Delta(C) > 0$ it is easy to generate collections of sets of bigger cardinality than $\mathcal{C}$ by adding any subflat of elements of $\mathcal{C}$ (see Proposition \ref{repet}). Note that 2-flat is equivalent to submodularity, so all matroids are trivially 2-flat. The first non trivial property would be 3-flat, and it happens to imply several important properties.

Now let $A, B \in \mathcal{F}$. We note as $r(A/B) := r(A\cup B) - r(B)$, which is the rank of $A$ when contracting $B$. Suppose there exists $B_0 \in \mathcal{F}$ such that for every flat $B_1 \subseteq B$, $r(A/B_1) = r(A/B)$ if and only if $B_0 \subseteq B_1$. $B_0$ is called the \emph{pseudointersection} of $A$ and $B$. Note that the pseudointersection is not symmetric, that is, the pseudointersection of $A$ and $B$ can be different from the pseudointersection of $B$ and $A$. Further, not even the existence of pseudointersection is symmetric. Example of these may be seen in \cite{bjorner1987pseudomodular}. If the pseudointersection of $A$ and $B$ exists, we write $A \blacktriangleleft B$. If it does not exist, we write $A \not \blacktriangleleft B$.

\begin{defn}
A matroid $M$ is \emph{pseudomodular} if for every $A, B \in \mathcal{F}$ we have $A \blacktriangleleft B$ 
\end{defn}

The concepts of pseudointersection and pseudomodularity speak in terms of the lattice of flats of a matroid and can be extended for any lattice. It is not difficult to see that pseudomodularity may be restated as follows: for every $A, B, C \in \mathcal{F}$ if $r(A/B) = r(A/C) = r(A/ B\cup C)$ then $r(A/ B\cap C= r(A/B)$. Consider again Example \ref{matroidY}. If $A = \{1,2\}$, $B = \{3,4,5,6\}$, $B_1 = \{3,4\}$ and $B_2 = \{5,6\}$ we have $r(A/B) = r(A/B_1) = r(A/B_2) = 1$. But $B_1 \cap B_2 = \emptyset$, so $r(A/ \emptyset) = r(A) = 2$. Then $A \not \blacktriangleleft B$ and so $M$ is not pseudomodular. If we consider the strict gammoid generated by $\Gamma$, say $M_0$, $X$ would be in the closure of $B$, $B_1$ and $B_2$, so $B_1 \cap B_2 = \{X\}$ and we would have $r(A/ \{X\}) = 1$. $M_0$ is actually modular, as Evans shows in \cite{evans2011matroid} that strict gammoids are pseudomodular. We show Theorem \ref{3fwcb} as a generalization of that result. Before proving Theorem \ref{3fwcb} we need the following lemma

\begin{lema}
\label{wcb1}
Let $M$ be a matroid which is not pseudomodular. Then there exists $B \in \mathcal{F}$ and $A \subseteq E$ such that $r(A/B) = 1$ and $A \not \blacktriangleleft B$.
\end{lema}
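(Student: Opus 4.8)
The plan is to argue through the restatement of pseudomodularity recalled just above: $M$ fails to be pseudomodular exactly when there are flats $A,B,C$ with $r(A/B)=r(A/C)=r(A/cl(B\cup C))=\rho$ for some $\rho\ge 1$ and $r(A/(B\cap C))>\rho$. For such a triple the family $\{F\in\mathcal F:F\subseteq cl(B\cup C),\ r(A/F)=\rho\}$ contains $B$ and $C$ but not $B\cap C$, hence is not closed under intersection, so $A\not\blacktriangleleft cl(B\cup C)$. Among all such triples in $M$ I would fix one with $\rho$ minimal, and among those one with $r(B)+r(C)$ minimal. If $\rho=1$ we are done: taking $B':=cl(B\cup C)$ and the subset $A$, we have $r(A/B')=1$ and $A\not\blacktriangleleft B'$ by the observation just made. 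So from now on assume $\rho\ge 2$ and aim for a contradiction.

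Before the main reduction I would isolate a lifting principle for contractions: if $T$ is a flat of $M$ and $M/T$ carries flats $A',B',C'$ with $r_{M/T}(A'/B')=r_{M/T}(A'/C')=r_{M/T}(A'/cl(B'\cup C'))=\rho'$ and $r_{M/T}(A'/(B'\cap C'))>\rho'$, then $cl_M(A'\cup T),\,cl_M(B'\cup T),\,cl_M(C'\cup T)$ form a triple of the same shape in $M$ with the same parameter $\rho'$. This is a direct computation from $r_{M/T}(X/Y)=r(X\cup Y\cup T)-r(Y\cup T)$ together with the fact that replacing a set by its closure does not alter the rank of a union (one also checks that the meet of two flats of $M/T$ pulls back correctly). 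Consequently it suffices to produce, after contracting a single non-loop element, a triple of the above kind in the contraction with parameter $\rho-1$ (or smaller): that triple lifts to one in $M$, contradicting minimality of $\rho$.

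For the reduction, pick $a\in cl_M(A\cup(B\cap C))\setminus cl_M(B\cup C)$; such an $a$ exists because $r(A/cl(B\cup C))=\rho\ge 1$ forces $A\not\subseteq cl_M(B\cup C)$ and hence $cl_M(A\cup(B\cap C))\not\subseteq cl_M(B\cup C)$, and $a$ is not a loop, being outside a flat. Writing $\bar X$ for the image of a flat $X$ in $M/a$, the standard fact that $r(A/{-})$ drops by at most one across a covering, applied to the line added by $a$, gives $r_{M/a}(\bar A/\bar B)=r_{M/a}(\bar A/\bar C)=r_{M/a}(\bar A/cl(\bar B\cup\bar C))=\rho-1$, while $r_{M/a}(\bar A/(\bar B\cap\bar C))=r(A/Q_a)$ where $Q_a:=cl_M(B\cup\{a\})\cap cl_M(C\cup\{a\})$. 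So the reduction closes as soon as $r(A/Q_a)\ge\rho$. Two applications of submodularity yield $r(Q_a)\le r(B)+r(C)-r(B\cup C)+1$ and $r(A\cup Q_a)\ge r(A\cup(B\cap C))\ge r(B\cap C)+\rho+1$, and therefore $r(A/Q_a)\ge\rho-(r(B)+r(C)-r(B\cup C)-r(B\cap C))$. Thus everything hinges on the parenthesised slack vanishing, i.e.\ on $(B,C)$ being a modular pair.

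The main obstacle is precisely arranging that the witnessing pair $(B,C)$ is modular. The approach I would take exploits the double minimality: minimality of $r(B)+r(C)$ first forces $B$ and $C$ to be minimal elements of $\{F\subseteq cl(B\cup C):r(A/F)=\rho\}$, so every flat strictly below $B$ (respectively $C$) has $r(A/{-})\ge\rho+1$; then, if $(B,C)$ were not modular, one descends a maximal chain of flats from $C$ down to $B\cap C$ and, using semimodularity of the lattice of flats, locates a flat $C'$ with $r(C')<r(C)$, $B\cap C'=B\cap C$, $r(A/cl(B\cup C'))=\rho$ and $r(A/C')=\rho$ — a strictly smaller witnessing triple, contradicting the choice of $B,C$. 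Making this last step precise, in particular controlling how $r(A/{-})$ evolves along the chain (which is where minimality of $\rho$ is genuinely used), is the delicate point; everything else is bookkeeping with submodularity and contraction that I expect to be routine.
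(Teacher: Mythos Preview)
Your reduction is largely parallel to the paper's induction: pick $a\in cl(A\cup(B\cap C))\setminus cl(B\cup C)$, pass to the flats $cl(B\cup\{a\}),\,cl(C\cup\{a\})$, and observe that the parameter drops by one provided the new meet $Q_a:=cl(B\cup\{a\})\cap cl(C\cup\{a\})$ satisfies $r(A/Q_a)\ge\rho$. The difficulty you single out---forcing $(B,C)$ to be a modular pair so that your submodularity estimate yields $r(A/Q_a)\ge\rho$---is a genuine gap, and the argument you sketch for it does not go through. Once you use minimality of $r(B)+r(C)$ to conclude that $B$ and $C$ are \emph{minimal} subflats with $r(A/\,\cdot\,)=\rho$, every flat $C'$ strictly below $C$ (in particular every flat on your chain from $C$ down to $B\cap C$) already has $r(A/C')\ge\rho+1$; so you cannot ``locate a flat $C'$ with $r(C')<r(C)$ \dots\ and $r(A/C')=\rho$'' as claimed. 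Semimodularity of the lattice does not rescue this, and there is no evident reason why a witnessing pair of minimal total rank should be modular.

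The paper avoids the modularity detour entirely by turning the obstruction into a \emph{new witness}. In your notation: if $r\bigl(Q_a/(B\cap C)\bigr)>1$, then the set $A':=Q_a\setminus(B\cap C)$ itself satisfies $r(A'/B)=r(A'/C)=r(A'/cl(B\cup C))=1$ while $r(A'/(B\cap C))>1$, so $A'\not\blacktriangleleft cl(B\cup C)$ with parameter $1$ and the lemma is proved outright. In the remaining case $r\bigl(Q_a/(B\cap C)\bigr)=1$ one has $r(Q_a)=r(B\cap C)+1$, whence
\[
r(A/Q_a)\;=\;r\bigl(A\cup(B\cap C)\bigr)-r(B\cap C)-1\;=\;r\bigl(A/(B\cap C)\bigr)-1\;\ge\;\rho,
\]
and your contraction step (equivalently, the paper's passage to $B_i':=cl(B_i\cup\{a\})$) reduces the parameter. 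Thus the correct dichotomy is on $r\bigl(Q_a/(B\cap C)\bigr)$, not on modularity of $(B,C)$: one branch finishes immediately with a rank-one witness, the other feeds the induction. Your proposal captures only the second branch.
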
 
\begin{proof}
If $M$ is not pseudomodular, then there exists $A, B\in \mathcal{F}$ such that $A \not \blacktriangleleft B$. This means there are flats $B_1, B_2 \subseteq B$ such that $r(A/B) = r(A/B_1) = r(A/B_2) < r(A/B_1\cap B_2)$. We now proof the lemma by induction on $r(A/B)$. If $r(A/B) = 1$ then we have the desired result.
Now let $r(A/B) = n$ and suppose the lemma is true whenever there are $B'  \in \mathcal{F}$, $A' \subseteq E$ such that $A' \not \blacktriangleleft B'$ and $r(A'/B') = n-1$. Let $x \in A\backslash B$ and consider the set
\begin{equation*}
C = (cl(B_1\cup\{x\})\cap cl(B_1\cup\{x\}))\backslash (B_1\cap B_2) 
\end{equation*}

As $C \subseteq cl(B_1\cup\{x\})$, $r(C\cup B_1) = r(B_1)+1$ and so $r(C/B_1) =1$. In the same way $r(C/B_2) =1$. As $x \in C\backslash B$ then $r(C/B) \geq 1$ and as $B_1 \subseteq B$ then $r(C/B) \leq r(C/B_1) = 1$, so $r(C/B) = 1$. If $r(C/B_1\cap B_2) > 1$ then $C \not \blacktriangleleft B$ and we have what we want.
Suppose now $r(C/B_1\cap B_2) = 1$. Note that $C\cup (B_1\cap B_2) = cl(B_1\cup\{x\})\cap cl(B_2\cup\{x\})$. Let $B' = cl(B\cup \{x\})$, $B_i' = cl(B_i \cup \{x\})$ for $i \in \{1,2\}$. As $x \in A$, $cl(A\cup B') = cl(A\cup B)$. Then 
\begin{eqnarray*}
r(A/B') &=& r(A\cup B') -r(B')\\ 
&=& r(A\cup B)-r(B) -(r(B') - r(B))\\ 
&=& r(A/B) -r(B'/B) = n-1
\end{eqnarray*}

In the same way $r(A/B_1') = r(A/B_2') = n-1$. As $B_1' \cap B_2' = C\cup (B_1 \cap B_2)$, we have:
\begin{eqnarray*}
r(A/B_1'\cap B_2') &=& r(A\cup(B_1'\cap B_2')) - r(B_1'\cap B_2') \\
&=& r(A\cup(B_1'\cap B_2')) -r(B_1\cap B_2)\\
&-&(r(B_1'\cap B_2')- r(B_1\cap B_2))\\ 
&=& r(A/B_1\cap B_2) -r(B_1'\cap B_2'/B_1\cap B_2) \\
&=& r(A/B_1\cap B_2) -r(C/B_1\cap B_2)\\
&=& r(A/B_1\cap B_2)-1 > n-1
\end{eqnarray*}
 Then $A \not \blacktriangleleft B'$, $r(A/B') = n-1$ and by the induction hypothesis the lemma is true.
\end{proof}

Now we can prove the main result of the section

\begin{teo}
Let $M$ be a 3-flat matroid. Then $M$ is pseudomodular.
\label{3fwcb}
\end{teo}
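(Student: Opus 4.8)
The plan is to prove the contrapositive: from a matroid $M$ that is \emph{not} pseudomodular I will manufacture a family of at most three flats $\mathcal{C}$ with $\Delta(\mathcal{C}) > 0$, so that $M$ fails to be 3-flat.

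The first step is to invoke Lemma \ref{wcb1}, which yields a flat $B$ and a set $A \subseteq E$ with $r(A/B) = 1$ and $A \not\blacktriangleleft B$. Next I unpack the failure of the pseudointersection. The family $\mathcal{S}$ of flats $F \subseteq B$ with $r(A/F) = r(A/B)$ is nonempty (it contains $B$), and if it were closed under intersection its least member $\bigcap\mathcal{S}$ would be exactly the pseudointersection of $A$ and $B$; since $A \not\blacktriangleleft B$ it is not, so there are flats $B_1, B_2 \subseteq B$ with $r(A/B_1) = r(A/B_2) = 1$ but $r(A/(B_1 \cap B_2)) \neq 1$. As $B_1 \cap B_2 \subseteq B$ forces $r(A/(B_1 \cap B_2)) \geq r(A/B) = 1$, in fact $r(A/(B_1 \cap B_2)) \geq 2$. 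Put $D = B_1 \cap B_2$.

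Now take $G_1 = cl(A \cup B_1)$, $G_2 = cl(A \cup B_2)$, $G_3 = cl(B_1 \cup B_2)$; the target is $\Delta(\{G_1, G_2, G_3\}) \geq 1$. Several ranks are immediate: $r(G_1) = r(B_1) + 1$ and $r(G_2) = r(B_2) + 1$ (because $r(A/B_1) = r(A/B_2) = 1$), $r(G_3) = r(B_1 \cup B_2)$, and $r(G_1 \cup G_2 \cup G_3) = r(A \cup B_1 \cup B_2) = r(B_1 \cup B_2) + s$ with $s := r(A/(B_1 \cup B_2)) \geq 1$, the inequality holding because $cl(B_1 \cup B_2) \subseteq B$ whereas $A \not\subseteq B$ (as $r(A/B) = 1$). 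Also $A \cup D \subseteq G_1 \cap G_2$, so $r(G_1 \cap G_2) \geq r(A \cup D) = r(D) + r(A/D) \geq r(D) + 2$.

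The crux — and the step I expect to be the main obstacle — is to evaluate the intersections of $G_3$ with $G_1$ and $G_2$ exactly, namely $G_1 \cap G_3 = B_1$ and $G_2 \cap G_3 = B_2$ (whence $G_1 \cap G_2 \cap G_3 = (G_1 \cap G_3) \cap (G_2 \cap G_3) = D$). To see $G_1 \cap G_3 = B_1$, pick $x \in A \setminus B$ (possible since $r(A/B) = 1$); then $G_1 = cl(B_1 \cup \{x\})$, and an element $n$ of $G_1 \cap G_3$ lying outside $B_1$ would give $cl(B_1 \cup \{n\}) = G_1$, hence $x \in cl(B_1 \cup \{n\}) \subseteq cl(B_1 \cup B_2) = G_3 \subseteq B$, contradicting $x \notin B$; the other equality is identical. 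Substituting all of these ranks into the inclusion--exclusion expression for $\Delta$, the $r(B_1 \cup B_2)$, $r(B_1)$ and $r(B_2)$ contributions cancel, leaving
\[
\Delta(\{G_1, G_2, G_3\}) = s - 2 + r(G_1 \cap G_2) - r(D) \geq s - 2 + (r(D)+2) - r(D) = s \geq 1 > 0 .
\]
This contradicts 3-flatness of $M$, completing the contrapositive. Note that no preliminary normalization of $A$ is needed — the computation never uses that $A$ is a flat.
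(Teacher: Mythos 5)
Your proof is correct and follows essentially the same route as the paper: reduce to $r(A/B)=1$ via Lemma \ref{wcb1}, form the three flats $cl(A\cup B_1)$, $cl(A\cup B_2)$ and a third flat inside $B$, and pin down the pairwise intersections by the exchange argument. The only deviation is taking $cl(B_1\cup B_2)$ rather than $B$ as the third flat, which changes nothing essential since the key containment $G_3\subseteq B$ is what drives the computation of $G_1\cap G_3=B_1$ and $G_2\cap G_3=B_2$, exactly as in the paper's choice $F_3=B$.
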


\begin{proof}
Suppose $M$ is not have pseudomodular. We will proof that it is not 3-flat. Let $A, B \in \mathcal{F}$ such that $A \not \blacktriangleleft B$ and $r(A/B) = 1$. Then by lemma \ref{wcb1} there are flats $B_1, B_2 \subseteq B$ such that $r(A/B_1) = (A/B_2) = 1 < r(A/B_1\cap B_2)$. Let $F_1 = cl(A\cup B_1)$, $F_2 = cl(A\cup B_2)$ and $F_3 = B$ and consider $\mathcal{C} = \{F_1, F_2, F_3\}$. We have
\begin{itemize}
	\item $F_\emptyset \subseteq cl(B \cup A)$. Then $r(F_\emptyset) = r(B\cup A)$
	\item $F_{\{1,2\}} = cl(A\cup B_1)\cap cl(A\cup B_2) \supseteq (B_1 \cap B_2)\cup A$. Then $r(F_{\{1,2\}}) \geq r((B_1 \cap B_2)\cup A)$
	\item $F_{\{1,3\}} = cl(A\cup B_1)\cap B = B_1$ as $cl(A\cup B_1)\backslash B_1 \subseteq E \backslash B$. Then $r(F_{\{1,3\}}) = r(B_1)$
	\item $F_{\{2,3\}} = cl(A\cup B_2)\cap B = B_2$ as $cl(A\cup B_2)\backslash B_2 \subseteq E \backslash B$. Then $r(F_{\{2,3\}}) = r(B_2)$
	\item $F_{\{1,2,3\}} = B_1\cap B_2$. Then $F_{\{1,2,3\}} = r(B_1\cap B_2)$ 
\end{itemize}
Then
\begin{eqnarray*}
\Delta(C) &=& r(F_\emptyset)- r(F_1)- r(F_2)- r(F_3)+ r(F_{\{1,2\}})\\
					&+& r(F_{\{1,3\}})+ r(F_{\{2,3\}}) - r(F_{\{1,2,3\}})\\
					&\geq & r(B\cup A)-r(B) - r(B_1\cup A) - r(B_2\cup A)\\
					&+& r((B_1 \cap B_2)\cup A)+r(B_1)+r(B_2) -r(B_1\cap B_2)\\
					&=& (r(B\cup A)-r(B)) -(r(B_1\cup A) - r(B_1))\\
					&-& (r(B_2\cup A) - r(B_2))+(r((B_1 \cap B_2)\cup A)-r(B_1\cap B_2))\\
					&=& r(A/B)-r(A/B_1)-r(A/B_2)+r(A/B_1 \cap B_2)\\
					&=& 1 - 1 - 1 +r(A/B_1 \cap B_2)\\
					&=& r(A/B_1 \cap B_2)-1\\
					&>& 0
\end{eqnarray*}
\end{proof}

Note that the lemma was used to ensure $F_{\{1,3\}} =  B_1$ and $F_{\{2,3\}} =  B_2$. If $r(A/B_1) > 1$, then not necessarily $cl(A\cup B_1)\backslash B_1 \subseteq E \backslash B$. We get pseudomodularity of strict gammoids as a corollary. 
\begin{crl}
Let $M$ be a strict gammoid, then $M$ is pseudomodular.
\label{corwcb}
\end{crl}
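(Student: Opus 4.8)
The proof of Corollary \ref{corwcb} is essentially immediate given what has already been established. By Theorem \ref{charac}, a strict gammoid $M$ is totally flat, meaning $\Delta(\mathcal{C}) \leq 0$ for every finite $\mathcal{C} \subseteq \mathcal{F}$. In particular, this holds for all $\mathcal{C}$ with $|\mathcal{C}| \leq 3$, so $M$ is 3-flat by definition. Then Theorem \ref{3fwcb} applies directly to conclude that $M$ is pseudomodular.

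So the plan is just to chain the two results: first invoke Theorem \ref{charac} to get total flatness, then observe that total flatness trivially implies 3-flatness (since 3-flatness only constrains $\Delta$ on the subfamily of collections of size at most 3), and finally apply Theorem \ref{3fwcb}. There is no real obstacle here — the only thing worth stating carefully is the containment of conditions: totally flat $\Rightarrow$ $n$-flat for every $n$ $\Rightarrow$ 3-flat, which is purely definitional. If one wanted to be self-contained one could spell out that a strict gammoid satisfies the hypothesis of Theorem \ref{3fwcb}, but no new argument is needed.

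I would write it as follows.

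\begin{proof}
By Theorem \ref{charac}, since $M$ is a strict gammoid it is totally flat, so $\Delta(\mathcal{C}) \leq 0$ for every finite $\mathcal{C} \subseteq \mathcal{F}$. In particular this inequality holds for every $\mathcal{C}$ with $|\mathcal{C}| \leq 3$, so $M$ is 3-flat. By Theorem \ref{3fwcb}, $M$ is pseudomodular.
\end{proof}
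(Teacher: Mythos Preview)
Your proof is correct and matches the paper's approach exactly: the corollary is stated without proof in the paper precisely because it follows immediately by chaining Theorem~\ref{charac} (strict gammoid $\Rightarrow$ totally flat) with Theorem~\ref{3fwcb} (3-flat $\Rightarrow$ pseudomodular), using the trivial observation that totally flat implies 3-flat.
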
 

The converse of Theorem \ref{3fwcb} is not true. We show the following example of a pseudomodular matroid that is not 3-flat. 

\begin{figure}[ht]
\centering
\caption{}
\scalebox{0.9}{
		\includegraphics{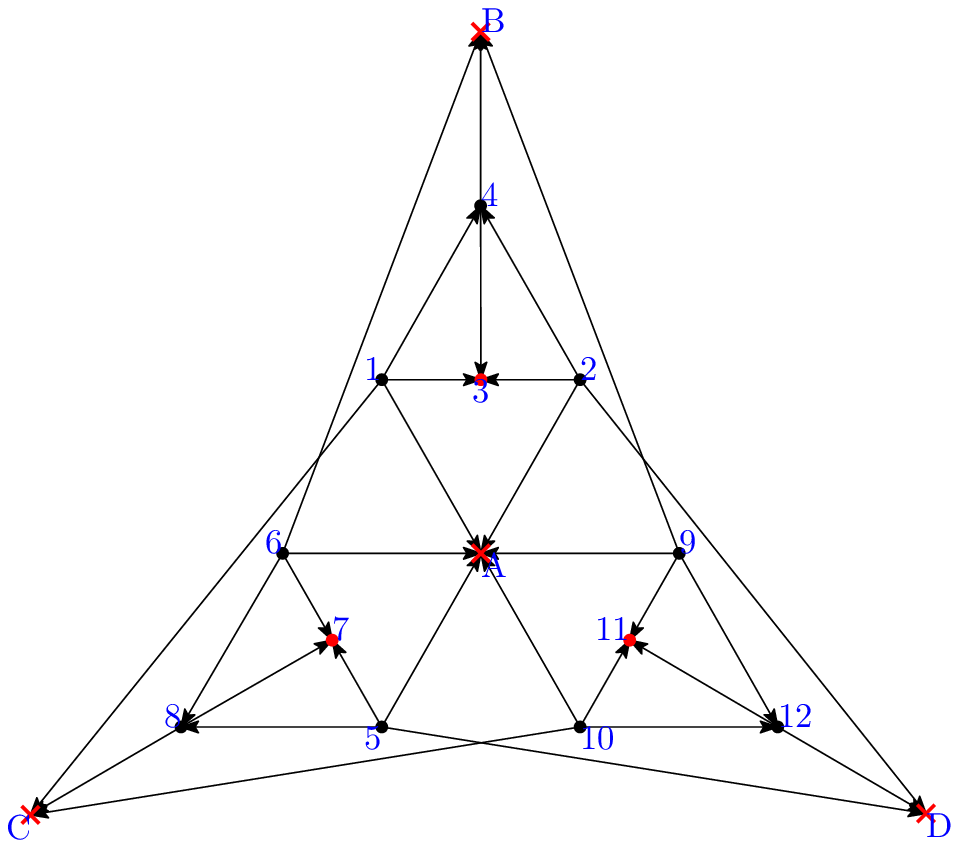}}
	\label{fig:counter1}
\end{figure}

\begin{eje}
Let $N = \{1,2,3,4,5,6,7,8,9,10,11,12\}$ and let $M$ be the matroid on $N$ which has as flats any subset of $N$ of size less or equal to 4, $\{1,3,4,6,7,8\}$, $\{2,3,4,9,11,12\}$, $\{5,7,8,10,11,12\}$, any subset of size 5 not contained in any of the previous 3, $F_1 = \{1,2,3,4,5,6,7,8\}$, $F_2 = \{1,2,3,4,9,10,11,12\}$, $F_3 = \{5,6,7,8,9,10,11,12\}$ and $N$. Figure \ref{fig:counter1} shows a strict gammoid, with sinks $A,B,C,D,3,7,11$ where restricting $\{A,B,C,D\}$ gives us matroid $M$.

Note that if $\mathcal{C} = \{F_1, F_2, F_3\}$, we have $F_{\{1,2\}} = \{1,2,3,4\}$, $F_{\{1,3\}} = \{5,6,7,8\}$, $F_{\{2,3\}} = \{9,10,11,12\}$ and $F_{\{1,2,3\}} = \emptyset$. So $\Delta(\mathcal{C}) = 7-3\cdot6+3\cdot4 = 1$, so $M$ is not 3-flat. Note that for any flat $B$ which is independent we have $A \blacktriangleleft B$ for every $A\in\mathcal{F}$, as $B_0$ would be the set of elements in $B$ that are not coloop in $B\cup A$. So the only flats we need to check are $F_1$, $F_2$, $F_3$, $\{1,3,4,6,7,8\}$, $\{2,3,4,9,11,12\}$ and $\{5,7,8,10,11,12\}$, which are easy to do using lemma \ref{wcb1}. This is our first example of a pseudomodular gammoid which is not a strict gammoid.
\end{eje}

\section{Flatness Degree}

As we have seen being 3-flat implies many interesting properties such as pseudomodularity. It may be worth noting that 3-flatness is sufficient ofr CM-triviality in \cite{hrushovski1993new}. This facts got us interested in studying $n$-flatness in general. We can begin with the following natural definition

\begin{defn}
Given a matroid $M$ we say that $\phi(M)$ is the \textit{flatness degree} of $M$ if $M$ is $\phi(M)$-flat but not $\phi(M)+1$-flat. If such integer does not exist (i. e. $M$ is totally flat) $\phi(M) = \omega$
\end{defn}

As $n$ increases it becomes much harder to study $n$-flatness. However, we will show that for each $n$ there is a matroid with flatness degree $n$, in other words, being $n$-flat is in fact different for each $n$. First we start by presenting some tools that are useful for studying the $\Delta$ function. Proposition \ref{repet} was already shown by Holland in \cite{holland1999flatness}, which includes a much deeper study of the $\Delta$ function (as $T$ function).

\begin{prop}
\label{repet}
Let $\mathcal{C} = \{F_i \in \mathcal{F} \enspace | \enspace k \in I\}$ a collection of flats. Suppose that there are $F_i, F_j \in \mathcal{C}$ such that $F_i \subseteq F_j$. Then $\Delta(C) = \Delta(C\backslash\{F_i\})$
\end{prop}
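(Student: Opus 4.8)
The plan is to exploit the inclusion--exclusion structure of $\Delta$ directly. Write $I$ for the index set of $\mathcal{C}$ and suppose $F_i \subseteq F_j$ with $i \neq j$. Split the sum defining $\Delta(\mathcal{C})$ according to whether a subset $S \subseteq I$ contains $i$ or not: $\Delta(\mathcal{C}) = \sum_{i \notin S}(-1)^{|S|}r(F_S) + \sum_{i \in S}(-1)^{|S|}r(F_S)$. Every term in the second sum corresponds to $S = S' \cup \{i\}$ with $i \notin S'$, contributing $(-1)^{|S'|+1}r(F_{S'\cup\{i\}})$. Pairing each such term with the term indexed by $S'$ in the first sum, the whole thing collapses to $\Delta(\mathcal{C}) = \sum_{i \notin S}(-1)^{|S|}\bigl(r(F_S) - r(F_{S\cup\{i\}})\bigr)$, where $F_S = \bigcap_{k\in S}F_k$ and $F_{S\cup\{i\}} = F_S \cap F_i$, with the convention for $S = \emptyset$ handled as in the paper (here $F_\emptyset = \bigcup_{k\in I}F_k$, and $F_{\{i\}} = F_i$, so the $S=\emptyset$ pair contributes $r(\bigcup_k F_k) - r(F_i)$, consistent with the general formula once one notes that removing $F_i$ does not change $\bigcup_{k\in I}F_k$ since $F_i \subseteq F_j$).

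The key observation is now that for every $S$ with $i \notin S$, the presence of $F_j$ forces $F_S \cap F_i$ to behave trivially. I would case on whether $j \in S$. If $j \in S$, then $F_S \subseteq F_j$ and since $F_i \subseteq F_j$ we do not immediately get equality, so instead observe $F_S \cap F_i = (F_S \cap F_j) \cap F_i = F_S \cap F_i$ --- that is circular, so the cleaner route is: whenever $j \in S$ we have $F_S \subseteq F_j$, hence $F_S$ and $F_S \cap F_i$ are both contained in $F_j$, but the crucial point is that $F_{S} \cap F_i$ equals $F_{S\setminus\{j\}\cup\{j\}}\cap F_i$; this is getting tangled, so let me instead take the genuinely clean approach below.

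Better: I would prove the result by induction on $|I|$, reducing to the case $|I| = 2$. For $|I| = 2$, with $\mathcal{C} = \{F_i, F_j\}$ and $F_i \subseteq F_j$, we have $\Delta(\mathcal{C}) = r(F_i \cup F_j) - r(F_i) - r(F_j) + r(F_i \cap F_j) = r(F_j) - r(F_i) - r(F_j) + r(F_i) = 0 = \Delta(\{F_j\}) - \Delta(\{F_i\})$... but that is not quite the statement either; what we want is $\Delta(\mathcal{C}) = \Delta(\mathcal{C}\setminus\{F_i\})$, and $\Delta(\{F_j\}) = r(F_j)$ while here $\Delta(\{F_i,F_j\})$ with $F_\emptyset = F_i \cup F_j = F_j$ gives $r(F_j) - r(F_i) - r(F_j) + r(F_i) = 0 \neq r(F_j)$ in general. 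So the statement must intend $|\mathcal{C}| \geq 3$, or $F_\emptyset$ defined relative to a fixed ambient collection; rereading the paper, $\Delta$ is defined on a collection and $F_\emptyset$ is the union over that collection, so the honest statement needs $|I| \geq 3$ (so that $\mathcal{C}\setminus\{F_i\}$ still has $\geq 2$ elements and a well-defined $F_\emptyset$ equal to the same union). Granting this, I would use the collapsed formula $\Delta(\mathcal{C}) = \sum_{i\notin S}(-1)^{|S|}\bigl(r(F_S) - r(F_S\cap F_i)\bigr)$ and split the outer sum over $S \subseteq I\setminus\{i\}$ according to $j \in S$ or not. When $j \in S$: pair $S$ with $S\setminus\{j\}$; using that $F_i \subseteq F_j$ one checks $F_S \cap F_i = F_{S\setminus\{j\}} \cap F_i$ (because $F_S = F_{S\setminus\{j\}}\cap F_j \supseteq F_{S\setminus\{j\}}\cap F_i$ and intersecting again with $F_i$ absorbs $F_j$), and these two terms have opposite signs, but $r(F_S)$ and $r(F_{S\setminus\{j\}})$ generally differ, so that pairing alone does not cancel.

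Given the subtleties above, the cleanest correct plan is: apply the $|I|=2$-style identity locally. Fix $i,j$ with $F_i\subseteq F_j$, let $J = I\setminus\{i\}$, and for each $S\subseteq J$ group the four terms indexed by $S$, $S\cup\{i\}$ inside $\Delta(\mathcal{C})$ against the two terms $S$, (and use $j$): precisely, partition all $S'\subseteq I$ by their trace on $\{i,j\}$ into four classes, and show the class-$\{i,j\text{-both}\}$ terms cancel the class-$\{j\text{-only}\}$ terms using $F_{S\cup\{i,j\}} = F_{S\cup\{j\}}$ (valid since $F_i\supseteq$ nothing --- rather $F_j \cap F_i = F_i \subseteq$ everything, so $F_{S\cup\{j\}}\cap F_i = F_{S\cup\{j,i\}}$ and also $F_{S\cup\{j\}}\subseteq F_j$, hmm). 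The genuinely true cancellation is $F_{S\cup\{i\}} = F_{S\cup\{i,j\}}$ for every $S$ not containing $i,j$: indeed $F_{S\cup\{i,j\}} = F_{S\cup\{i\}}\cap F_j = F_{S\cup\{i\}}$ since $F_{S\cup\{i\}}\subseteq F_i\subseteq F_j$. So the terms for $S\cup\{i\}$ (sign $(-1)^{|S|+1}$) and $S\cup\{i,j\}$ (sign $(-1)^{|S|+2}$) are equal in rank and opposite in sign, hence cancel in pairs across $\Delta(\mathcal{C})$. What survives is exactly $\sum_{S'\subseteq I,\ i\notin S'}(-1)^{|S'|}r(F_{S'})$, which is precisely $\Delta(\mathcal{C}\setminus\{F_i\})$ (the union $F_\emptyset$ is unchanged because $F_i\subseteq F_j$). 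I expect the main obstacle to be purely bookkeeping: making the pairing $S'\mapsto S'\triangle\{j\}$ on subsets containing $i$ a clean involution and verifying the edge case $S=\emptyset$, i.e. that dropping $F_i$ leaves $F_\emptyset=\bigcup_{k\in I}F_k$ intact, which holds since $F_i\subseteq F_j$.
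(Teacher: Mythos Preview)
After the exploratory detours, the argument you land on in the final paragraph is correct and is exactly the paper's proof: split the sum over $S\subseteq I$ according to whether $i\in S$; the terms with $i\notin S$ give $\Delta(\mathcal{C}\setminus\{F_i\})$ since $F_i\subseteq F_j$ leaves $F_\emptyset$ unchanged; and on the terms with $i\in S$ the involution $S'\mapsto S'\triangle\{j\}$ cancels everything because $F_{S\cup\{i\}}=F_{S\cup\{i,j\}}$. Your worry about the case $|I|=2$ is unfounded: by the paper's convention $\Delta(\{F_j\}) = r(F_\emptyset)-r(F_j) = r(F_j)-r(F_j)=0$, not $r(F_j)$, so the statement holds there too.
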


\begin{proof}
Note that $\bigcup C = \bigcup (C\backslash\{F_i\})$. Let $\mathcal{C}_i = \{F_{\{i, k\}} \enspace | \enspace k \in I \enspace \wedge \enspace i \neq k\}$. So $\Delta(C) = \Delta(C\backslash\{F_i\})-\Delta(\mathcal{C}_i)$. But for each $S \subseteq I \backslash\{i,j\}$, $F_{S\cup\{i\}} = F_{S\cup\{i, j\}}$ so $\Delta(\mathcal{C}_i) = 0$ and we have the desired result.
\end{proof}

\begin{prop}
\label{cyclic}
Let $\mathcal{C} = \{F_i \in \mathcal{F} \enspace | \enspace k \in I\}$ be a collection of flats. Then there exists $\mathcal{C}'$ such that $|\mathcal{C}'| = |\mathcal{C}|$, $\Delta(\mathcal{C}') \geq \Delta(\mathcal{C})$ and all elements of $\mathcal{C}'$ are cyclic.
\end{prop}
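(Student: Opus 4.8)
The plan is to induct on a measure of how far the flats in $\mathcal{C}$ are from being cyclic and to reduce them one coloop at a time. For a flat $F$, repeatedly deleting a coloop of the restriction $M\setminus(N\setminus F)$ produces a chain of flats of strictly decreasing rank: deleting a coloop of the restriction of a flat again yields a flat (a short submodularity argument), and since a coloop lies in no circuit, such a deletion leaves the family of circuits contained in the set unchanged. The process therefore stabilizes at a coloop-free flat $F^{\circ}\subseteq F$ that contains every circuit inside $F$, and hence does not depend on the choices made; write $\delta(F)=r(F)-r(F^{\circ})$, so $\delta(F)=0$ exactly when $F$ is cyclic. I would induct on $\delta(\mathcal{C}):=\sum_{F\in\mathcal{C}}\delta(F)$. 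If $\delta(\mathcal{C})=0$ every element of $\mathcal{C}$ is already cyclic and we take $\mathcal{C}'=\mathcal{C}$.

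Otherwise some $F_i\in\mathcal{C}$ is not cyclic; fix a coloop $x$ of $M\setminus(N\setminus F_i)$ and let $\mathcal{C}'$ be the multiset obtained from $\mathcal{C}$ by replacing that copy of $F_i$ with the flat $F_i\setminus\{x\}$. Because deleting $x$ does not change the circuits inside $F_i$, we get $(F_i\setminus\{x\})^{\circ}=F_i^{\circ}$, so $\delta(\mathcal{C}')=\delta(\mathcal{C})-1$ and the induction hypothesis applies to $\mathcal{C}'$. Thus it is enough to prove $\Delta(\mathcal{C}')\geq\Delta(\mathcal{C})$; iterating the induction then yields a $\mathcal{C}''$ with $|\mathcal{C}''|=|\mathcal{C}|$, all elements cyclic, and $\Delta(\mathcal{C}'')\geq\Delta(\mathcal{C})$.

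For the inequality, the crucial point is that since $x$ is a coloop of $M\setminus(N\setminus F_i)$, for every $Y\subseteq F_i$ we have $r(Y\cup\{x\})=r(Y)+1$. Index $\mathcal{C}$ by $I$ with $F_i$ at $i$, and for $S\subseteq I$ let $F_S$ be as in the definition of $\Delta$. Replacing $F_i$ by $F_i\setminus\{x\}$ affects only the terms $F_S$ with $i\in S$ and the term $F_\emptyset$; and for $i\in S$ the set $F_S\subseteq F_i$ changes only when $x\in F_S$, in which case the displayed identity (with $Y=F_S\setminus\{x\}$) makes $r(F_S)$ drop by exactly $1$. Put $J=\{k\in I:x\in F_k\}$, so $i\in J$. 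If $|J|\geq 2$, then $x$ still lies in some $F_k$ with $k\neq i$, so $F_\emptyset$ is unchanged as a set, and $\Delta(\mathcal{C})-\Delta(\mathcal{C}')=\sum_{S:\,i\in S\subseteq J}(-1)^{|S|}=0$ by the usual alternating sum over $S\setminus\{i\}\subseteq J\setminus\{i\}\neq\emptyset$; exactly as in Proposition~\ref{repet}. If instead $J=\{i\}$, the only changed terms are $F_{\{i\}}=F_i$, whose rank drops by $1$, and $F_\emptyset$, which loses the single element $x$ as a set and whose rank therefore drops by $0$ or $1$; hence $\Delta(\mathcal{C})-\Delta(\mathcal{C}')=\bigl(r(F_\emptyset)-r(F_\emptyset\setminus\{x\})\bigr)-1\leq 0$. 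In either case $\Delta(\mathcal{C}')\geq\Delta(\mathcal{C})$, completing the induction.

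The main obstacle is the bookkeeping in the last paragraph rather than any single hard idea: one must be careful that $F_\emptyset$ is a union of flats, not a flat, so it behaves differently from the intersection terms and must be tracked separately; and one must correctly identify the set $J$ of indices whose flat contains $x$ so that the alternating-sum cancellation applies precisely when $|J|\geq 2$. The two supporting facts — that deleting a coloop of the restriction of a flat gives a flat, and that this operation preserves both the cyclic core and (hence) drops $\delta$ by one — are routine and I would dispatch them first, after which the rank identity $r(Y\cup\{x\})=r(Y)+1$ for $Y\subseteq F_i$ drives the whole computation.
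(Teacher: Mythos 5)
Your proof is correct and follows essentially the same route as the paper's: induct on the total number of coloops over the members of $\mathcal{C}$, delete a coloop from one flat to get a new flat, and use the alternating-sum cancellation over the set of indices whose flats contain that element to show $\Delta$ does not decrease. The only cosmetic difference is that the paper splits cases on whether the deleted element is also a coloop of $F_\emptyset$ (sometimes deleting it from several flats at once), whereas you always delete from a single flat and split on $|J|$; both case analyses give the same bounds.
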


Although this proposition was not previously stated as it is, it is widely regarded that being transversal (as well as being strict gammoid and hence being totally flat) depends only on the cyclic flats. So the proposition results natural.
\begin{proof}
The proof is done by induction on the sum of the number of isthmus of elements in $\mathcal{C}$. If the sum is $0$ we have the desired result. Now suppose $F_i \in \mathcal{C}$ has an isthmus $e$.  Let $S := \{i\in I \enspace | \enspace e \in F_i\}$. First suppose $e$ is also an isthmus in $F_\emptyset$. Then every subset of $F_\emptyset$ who contains $e$, has it as an isthmus. Now for every $j \in S$, $F_j\backslash\{e\}$ is also a flat different than $F_k$ for every $k \neq j$. Consider $\mathcal{D} := \{F_j \enspace | \enspace j \notin S\} \cup \{F_k\backslash\{e\} \enspace | \enspace k \in S\}$. We have $|\mathcal{D}| = |\mathcal{C}|$, and $\Delta(\mathcal{C})-\Delta(\mathcal{D}) =  \sum\limits_{A \subseteq S} (-1)^{|A|} = 0$. As $\mathcal{D}$ has $|S|$ less isthmuses than $\mathcal{C}$ by induction hypothesis there is a collection of cyclic flats $\mathcal{C}'$ such that $|\mathcal{C}'| = |\mathcal{D}| = |\mathcal{C}|$ and $\Delta(\mathcal{C}') \geq \Delta(\mathcal{D}) = \Delta(\mathcal{C})$ 

Now suppose $e$ is not an isthmus in $F_\emptyset$. Let $\mathcal{D} := \{F_j \enspace | \enspace j \neq i\} \cup \{F_i\backslash\{e\} \}$. Again $F_i \backslash\{e\} \neq F_j$ for $i \neq j$ and $|\mathcal{D}| = |\mathcal{C}|$. If $|S| \geq 2$, have $\Delta(\mathcal{C})-\Delta(\mathcal{D}) =  \sum\limits_{A \subseteq (S\backslash \{i\})} (-1)^{|A|+1} = 0$. If $S = \{i\}$ then $\Delta(\mathcal{C})-\Delta(\mathcal{D}) = -1$. The number of isthmuses in $\mathcal{D}$ is one less than $\mathcal{C}$ so by induction hypothesis the proposition such $\mathcal{C}'$ exists.
\end{proof}

\begin{prop}
\label{cont}
Let $\mathcal{C} = \{F_i \in \mathcal{F} \enspace | \enspace k \in I\}$ be a collection of flats such that $|\mathcal{C}| > 1$. Then there exists $\mathcal{C}'$ such that $|\mathcal{C}'| = |\mathcal{C}|$, $\Delta(\mathcal{C}') \geq \Delta(\mathcal{C})$ and $\forall F \in \mathcal{C}'$ $F \subseteq cl(\bigcup(\mathcal{C}' \backslash \{F\}))$ 
\end{prop}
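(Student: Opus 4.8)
The plan is to modify $\mathcal{C}$ one flat at a time, using contractions to force each flat into the closure of the union of the others, while never decreasing $\Delta$ and never changing $|\mathcal{C}|$. Suppose some $F_j \in \mathcal{C}$ satisfies $F_j \not\subseteq cl(\bigcup(\mathcal{C}\setminus\{F_j\}))$. Write $G = cl(\bigcup(\mathcal{C}\setminus\{F_j\}))$ and pick an element $x \in F_j \setminus G$ (more generally one could contract the whole set $F_j\setminus G$, or proceed inductively on $|F_j\setminus G|$). The idea is to pass to the contraction $M/x$ and replace $F_j$ by $cl(F_j)$ computed in $M/x$, i.e.\ by $F_j\setminus\{x\}$ (a flat of $M/x$, since $F_j$ is a flat of $M$ containing $x$), and replace every other $F_i$ by its closure in $M/x$. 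Then I would compare $\Delta$ computed in $M$ with $\Delta$ computed in $M/x$ on the new collection.

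The key computation is the standard fact that contracting a single element $x$ drops the rank of every flat containing $x$ by exactly $1$ and fixes the rank of every flat not containing $x$ (where here "flat not containing $x$" should be read via its closure in $M/x$, which adds $x$ and keeps the same rank, hence the rank in $M/x$ of that closure equals the original rank minus... one needs to be careful with the bookkeeping exactly as in Propositions \ref{repet} and \ref{cyclic}). Concretely: for $S \subseteq I$, in $M/x$ the rank of the intersection term is $r(F_S) - 1$ if $x \in F_S$ and $r(F_S)$ otherwise, with the analogous statement for $F_\emptyset = \bigcup_i F_i$, which contains $x$ since $x \in F_j$. So the change $\Delta_M(\mathcal{C}) - \Delta_{M/x}(\mathcal{C}^{new})$ is an alternating sum $\sum_{S : x\in F_S}(-1)^{|S|}$ over exactly those $S$ with $x\in F_S$. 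Since $x \notin G$, we have $x \in F_S$ if and only if $S$ contains $j$; so this sum is $\sum_{S \ni j}(-1)^{|S|} = -\sum_{T\subseteq I\setminus\{j\}}(-1)^{|T|}$, which is $0$ when $|I| \ge 2$ and is $-1$ only in the degenerate case $I = \{j\}$, excluded by the hypothesis $|\mathcal{C}|>1$. Hence $\Delta_{M/x}(\mathcal{C}^{new}) = \Delta_M(\mathcal{C})$, and we have replaced $M$ by a minor and $\mathcal{C}$ by a collection of flats of that minor with the same cardinality and the same $\Delta$, in which $x$ has been absorbed; iterating over all offending elements $x$ (an induction on $\sum_{F\in\mathcal{C}} |F \setminus cl(\bigcup(\mathcal{C}\setminus\{F\}))|$, or simply on the rank of the ambient matroid) terminates, and at the end every flat lies in the closure of the union of the others. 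I should also note that distinctness as a multiset is irrelevant here since $\mathcal{C}$ is a multiset, so collapsing two flats to equality causes no problem.

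The main obstacle I expect is purely bookkeeping: making sure that "taking closures in the contraction" interacts correctly with the $\Delta$ formula, in particular that $F_\emptyset$ (the union, not an intersection) is handled with the right sign, and that after contracting $x$ the sets $F_i\setminus\{x\}$ really are the flats of $M/x$ one wants. One must verify the rank identities $r_{M/x}(F_S\setminus\{x\}) = r_M(F_S) - 1$ for $x\in F_S$ and $r_{M/x}(cl_{M/x}(F_S)) = r_M(F_S)$ for $x\notin F_S$ — both immediate from $r_{M/x}(Y) = r_M(Y\cup\{x\}) - r_M(\{x\})$ and the fact that $x$ is not a loop (if $x$ were a loop of $M$ it would lie in $G$ already, contradiction, so $r_M(\{x\}) = 1$). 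Once these are in place the alternating-sum cancellation is identical in spirit to the one in the proof of Proposition \ref{cyclic}, and the result follows.
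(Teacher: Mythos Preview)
Your argument has two real problems, one conceptual and one technical.

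\textbf{Conceptual.} The proposition asks for a collection $\mathcal{C}' \subseteq \mathcal{F}(M)$, i.e.\ flats of the \emph{same} matroid, with $\Delta(\mathcal{C}') \ge \Delta(\mathcal{C})$. Your construction produces flats of the contraction $M/x$, and you compare $\Delta_{M/x}(\mathcal{C}^{new})$ with $\Delta_M(\mathcal{C})$. That proves a different statement. The paper's proof never leaves $M$: given $x \in F_i \setminus cl\big(\bigcup_{k\neq i}F_k\big)$, it picks any other index $j\neq i$ and replaces $F_j$ by $F_j' := cl(F_j\cup\{x\})$. Because $x$ lies in no $F_k$ with $k\neq i$, one has $F_j'\cap F_A = F_{A\cup\{j\}}$ for every $A\subseteq I\setminus\{i,j\}$, so the only terms that change in $\Delta$ are $-r(F_j')+r(F_j'\cap F_i)+r(F_j)-r(F_j\cap F_i)\ge 0$, and an obvious measure drops. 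This is simpler than the contraction route and yields $\mathcal{C}'\subseteq\mathcal{F}(M)$ directly.

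\textbf{Technical.} Even for your modified statement, the claim ``$x\in F_S$ if and only if $S$ contains $j$'' is backwards: for nonempty $S$, $F_S=\bigcap_{i\in S}F_i$, and since $x$ lies only in $F_j$ you get $x\in F_S$ iff $S\subseteq\{j\}$, i.e.\ $S=\{j\}$. Your alternating sum over $\{S: j\in S\}$ is the wrong index set; the correct sum is over $\{\emptyset,\{j\}\}$, which does give $0$, but not for the reason you wrote. More seriously, once you pass to closures in $M/x$ your intersection terms become $\bigcap_{i\in S} cl_M(F_i\cup\{x\})$, and there is no reason this equals $cl_M(F_S\cup\{x\})$; so the rank identity ``$r_{M/x}((F^{new})_S)=r_M(F_S)-[x\in F_S]$'' that you rely on need not hold. (It holds if you \emph{don't} take closures, but then $\mathcal{C}^{new}$ is not a collection of flats.)
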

\begin{proof}

Let $s(\mathcal{C}) = \sum\limits_{i \in I} r(F_i\backslash cl(\bigcup\limits_{j\neq i} F_j))$. If $s(\mathcal{C}) = 0$, then clearly $\forall i \in I \enspace F_i \subseteq cl(\bigcup\limits_{j\neq i} F_j)$. Now we will proceed by induction on $s(\mathcal{C})$. Suppose $s(\mathcal{C}) > 0$. Then there exists $i \in I$ such that $r(F_i\backslash cl(\bigcup\limits_{j\neq i} F_j)) \geq 1$. Let $x \in F_i\backslash cl(\bigcup\limits_{j\neq i} F_j)$ non loop, and take any $j \in I$ $j \neq i$. Consider $F_j' = cl(F_j\cup\{x\})$ and $\mathcal{D} = \{F_k \enspace | k \in I \enspace k\neq j\} \cup \{F_j'\}$. Clearly $|\mathcal{C}| = |\mathcal{D}|$. Note that if $A \subseteq I\backslash\{i,j\}$ we have $F_j' \cap F_A = F_{A\cup \{j\}}$. As $x\in (F_j'\cap F_i) \backslash (F_j\cap F_i)$ we have
\begin{eqnarray*}
	\Delta(\mathcal{D}) - \Delta(\mathcal{C}) &=& -r(F_j')+r(F_j'\cap F_i) +r(F_j) - r(F_j\cap F_i) \\
	&=& r(F_j'\cap F_i)- r(F_j\cap F_i)+1 \\
	&\geq& 0
\end{eqnarray*}

But now $r(F_i\backslash cl(\bigcup\limits_{k\neq i, j} F_k \cup \{F_j'\})) < r(F_i\backslash cl(\bigcup\limits_{k\neq i} F_k))$, so $s(\mathcal{D}) < s(\mathcal{C})$. Then by induction hypothesis, there is a $\mathcal{C}'$ such that $|\mathcal{C}'| = |\mathcal{D}|$ and $\Delta(\mathcal{C}') \geq \Delta(\mathcal{D}) \geq \Delta(\mathcal{C})$ and $\forall F \in \mathcal{C}'$ $F \subseteq cl(\bigcup(\mathcal{C}' \backslash \{F\}))$.
\end{proof}

It is now easy to verify that one can assume that all elements of $\mathcal{C}$ must be of rank at least 2. So we can focus on studying only the particular class of collections of flats who are cyclic, with rank at least 2, no flat is contained in another, and every flat is contained in the closure of the union of the rest of the flats. Before we show matroids with fixed flatness degree, we prove the following combinatorial lemma. 

\begin{lema}
\label{propcomb}
Let $n$ be a positive integer, and $l$, $m$ positive integers lesser than $n$. Then
\begin{equation*}
\sum\limits_{k = 0}^m (-1)^k \dbinom{n-k}{l} \dbinom{m}{k} = \dbinom{n-m}{l-m}
\end{equation*}
In particular, when $m>l$ the equation is equal to 0.
\end{lema}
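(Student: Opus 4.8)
The plan is to prove the identity by induction on $m$, using only Pascal's rule $\binom{N}{j} = \binom{N-1}{j} + \binom{N-1}{j-1}$ together with the standard convention $\binom{N}{j} = 0$ for $j < 0$. It is cleanest to prove the slightly more general statement that the identity holds for all integers $n \geq m \geq 0$ and all integers $l \geq 0$; the case in the statement, where $0 < l < n$ and $0 < m < n$, is a special case. Note that $n \geq m$ forces $n - k \geq 0$ for every index $k$ appearing in the sum, so no binomial coefficient with a negative top argument is ever encountered. The base case $m = 0$ reads $\binom{n}{l} = \binom{n}{l}$, so only the inductive step requires work.

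For the inductive step, fix $m \geq 1$ and assume the identity (in its general form) holds with $m$ replaced by $m - 1$. In $\sum_{k=0}^{m}(-1)^k\binom{n-k}{l}\binom{m}{k}$, expand $\binom{m}{k} = \binom{m-1}{k} + \binom{m-1}{k-1}$. The first resulting sum is $\sum_{k=0}^{m-1}(-1)^k\binom{n-k}{l}\binom{m-1}{k}$, which the induction hypothesis (with parameters $n$, $l$, $m-1$) evaluates to $\binom{n-m+1}{l-m+1}$. In the second sum the term $k = 0$ vanishes, and reindexing $k \mapsto k+1$ rewrites it as $-\sum_{k=0}^{m-1}(-1)^k\binom{(n-1)-k}{l}\binom{m-1}{k}$; the induction hypothesis applied with parameters $n-1$, $l$, $m-1$ (legitimate since $n - 1 \geq m - 1$) evaluates this to $-\binom{n-m}{l-m+1}$. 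Adding the two contributions and invoking Pascal's rule once more, $\binom{n-m+1}{l-m+1} - \binom{n-m}{l-m+1} = \binom{n-m}{l-m}$, which closes the induction.

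The ``in particular'' remark is then immediate: when $m > l$ we have $l - m < 0$, hence $\binom{n-m}{l-m} = 0$. Conceptually this also follows by recognizing $\sum_{k=0}^{m}(-1)^k\binom{m}{k} f(n-k)$ as the $m$-th forward difference of $f$ evaluated at $n - m$; here $f(x) = \binom{x}{l}$ is a polynomial of degree $l$ in $x$, whose forward differences of order exceeding $l$ vanish identically, while its $m$-th forward difference is $\binom{x}{l-m}$ by repeated application of Pascal's rule. I do not expect any serious obstacle here: the only points requiring attention are keeping the binomial-coefficient conventions consistent for out-of-range arguments and making sure the induction hypothesis is invoked with the substitution $n \mapsto n - 1$ in the reindexed second sum.
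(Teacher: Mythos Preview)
Your induction on $m$ is correct and cleanly executed: the split via $\binom{m}{k}=\binom{m-1}{k}+\binom{m-1}{k-1}$, the two applications of the hypothesis (once with $n$ and once with $n-1$), and the final use of Pascal's rule all check out, and your care about the range $n\ge m$ keeps every top argument nonnegative so the conventions are unambiguous.

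This is, however, a genuinely different argument from the one in the paper. The paper gives a direct double-counting proof: it reads $\binom{n-k}{l}\binom{m}{k}$ as the number of pairs $(K,L)$ with $K\subseteq[m]$, $|K|=k$, $L\subseteq[n]$, $|L|=l$, and $K\cap L=\emptyset$; switching the order of summation, for each fixed $L$ the signed inner sum over $K\subseteq[m]\setminus L$ collapses to $0$ unless $[m]\subseteq L$, and the surviving count is $\binom{n-m}{l-m}$. That approach is a one-shot inclusion--exclusion/sign-reversing argument with no induction. Your approach is purely algebraic and more mechanical; it has the advantage of being self-contained from Pascal's rule alone and of making the finite-difference interpretation you mention at the end completely transparent (each inductive step is literally one application of the difference operator), whereas the paper's proof gives a bijective meaning to both sides. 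Either is perfectly adequate for the lemma's role in the paper.
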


\begin{proof}
Note that the $\dbinom{n-k}{l}\dbinom{m}{k}$ is the number of ways of choosing a subset $K$ of $[m]$ of cardinality $k$ and then choosing a subset $L$ of $[n]$ of cardinality $l$ such that $L\cap K = \emptyset$. The LHS is summing the number of ways to take an arbitrary subset $K \subseteq [m]$ and then choosing $L$ with an alternating sign depending on $|K|$. Now if we choose first the set $L$, we have now to choose subsets $K$ of $[m]\backslash L$. So for a fixed set $L$ where $j = |[m]\backslash L|$ we will have $\sum\limits_{k = 0}^{j}\dbinom{j}{k}(-1)^j$. Now this is 0 for $j>1$ and 1 for $j= 0$. So the only terms that survive are the ones where $[m] \subseteq L$. But the number of ways of choosing $L$ such that $[m] \subseteq L \subseteq [n]$ is precisely $\dbinom{n-m}{l-m}$
\end{proof}

Now we have the tools to prove the following theorem

\begin{teo}
\label{nflat}
Let $n \geq 2$. Then there exists a matroid $M$ with flatness degree $n$. Moreover, there exists a gammoid with flatness degree $n$.
\end{teo}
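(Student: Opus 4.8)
The plan is to construct, for each $n \geq 2$, an explicit matroid whose lattice of flats contains a carefully chosen family of $n$ flats witnessing $\Delta > 0$, while ruling out all smaller families. A natural candidate is a ``truncated Boolean'' or paving-type construction: take a ground set $N$ partitioned (or covered) by $n$ blocks, declare all sufficiently small sets to be flats, declare the $n$ blocks $F_1, \dots, F_n$ to be flats of some rank $r$, close up under intersection, and top it off with $N$. The collection $\mathcal{C} = \{F_1, \dots, F_n\}$ should then satisfy $\Delta(\mathcal{C}) = r(N) - \sum_i r(F_i) + \sum_{|S|=2} r(F_S) - \cdots$, and by arranging the blocks so that every $j$-fold intersection $F_S$ (for $|S| = j$) is a flat of a predictable rank, Lemma \ref{propcomb} is exactly the combinatorial identity that evaluates this alternating sum. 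I would choose the ranks so that this $n$-fold alternating sum comes out to $1$ (hence not $n$-flat) but every sub-family of size $< n$, together with every other family of $\leq n-1$ flats, gives $\Delta \leq 0$.

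Concretely, I would set things up so that $r(F_S)$ depends only on $|S|$: say $r(F_S) = \rho(|S|)$ for a function $\rho$ that is ``generic'' (i.e.\ $\rho(j) = $ some linear-looking function truncated below, mimicking $\binom{n-j}{l}$-type behavior), so that the full sum $\sum_{S} (-1)^{|S|} \rho(|S|) = \sum_{j=0}^n (-1)^j \binom{n}{j} \rho(j)$ collapses via the lemma. The first reduction step is to invoke Propositions \ref{repet}, \ref{cyclic}, and \ref{cont}: any family maximizing $\Delta$ may be assumed to consist of cyclic flats of rank $\geq 2$, none contained in another, with each contained in the closure of the union of the rest. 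In the matroid I build, I would argue that the only such families of size $\leq n$ with all flats ``large'' are (up to the moves in those propositions) the family $\{F_1,\dots,F_n\}$ and its sub-families, plus families living inside a single $F_i$ or inside $N$ trivially; a short case analysis then shows every proper sub-collection and every size-$(n-1)$ collection has $\Delta \leq 0$, so $\phi(M) = n$ exactly.

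For the ``moreover'' part — that the example can be taken to be a gammoid — I would exhibit a directed graph (as in Example \ref{matroidY} and the example after Theorem \ref{3fwcb}) representing a strict gammoid $M_0$ on a larger vertex set, and then realize $M$ as a restriction $M_0 \setminus A$ for a suitable set of sinks $A$. Since gammoids are closed under restriction and every gammoid is a restriction of a strict gammoid, it suffices to give the digraph: one places ``source'' vertices corresponding to $N$, a layer of internal vertices enforcing the block structure of the $F_i$, and sink vertices $A \cup \{\text{a few elements of } N\}$ so that linkings into the sinks realize exactly the independent sets of $M$. The flats $F_i$ become images of the cyclic flats one expects from the digraph, and one checks the rank of each $F_S$ matches $\rho(|S|)$ by a routine Menger/linking argument.

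The main obstacle I expect is the second half of the reduction: proving that in the constructed matroid \emph{no} family of $\leq n$ flats other than (essentially) $\{F_1,\dots,F_n\}$ beats $\Delta = 0$, and in particular that every $(n-1)$-element family has $\Delta \leq 0$. Getting $\Delta(\{F_1,\dots,F_n\}) = 1$ is just plugging the chosen ranks into Lemma \ref{propcomb}; the real work is ensuring the construction is ``tight'', i.e.\ that the small flats and the intersections $F_S$ don't accidentally support some other positive $\Delta$ with fewer sets. I would handle this by making the matroid as ``generic as possible subject to the prescribed flats'' (a transversal/paving construction is ideal here, since then independence is controlled by a simple counting condition), and then using the normalization from Propositions \ref{repet}–\ref{cont} to cut the case analysis down to families whose union is all of $N$ and whose members are among the distinguished flats, where the alternating-sum identity of Lemma \ref{propcomb} forces $\Delta \leq 0$ whenever the number of ``big'' flats is less than $n$. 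A secondary obstacle is bookkeeping in the digraph construction to confirm it gives precisely $M$ and not a slightly different matroid; drawing the graph for small $n$ first (as the paper does) and then describing the general pattern is the cleanest route.
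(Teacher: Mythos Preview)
Your plan is essentially the paper's: build a matroid with $n$ distinguished flats $F_1,\dots,F_n$ whose $j$-fold intersections have rank depending only on $j$, compute $\Delta$ for each subfamily via Lemma \ref{propcomb}, and invoke Proposition \ref{cyclic} to reduce arbitrary families to families of cyclic flats. Where you stay schematic, the paper's concrete choice dissolves both obstacles you anticipate. It takes $N = \binom{[n]}{2}$ and the \emph{transversal} matroid with presentation $(A_1,\dots,A_n,N,\dots,N)$, where $A_i = \{x \in N : i \in x\}$; being transversal, this is already a gammoid, so no separate digraph construction is needed. More importantly, in this matroid the \emph{only} non-trivial cyclic flats are the $n$ complements $F_i = \{x \in N : i \notin x\}$, so after Proposition \ref{cyclic} the only families one must check are subfamilies of $\{F_1,\dots,F_n\}$; Lemma \ref{propcomb} then gives $\Delta = m - n + 1$ for the $m$-element subfamily, and the ``tightness'' case analysis you worry about never arises.
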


\begin{proof}
Consider $N := \dbinom{ [n] }{2}$, the subsets of $[n] := \{1,2, \dots n\}$ of size 2. For $i\in [n]$ let $A_i = \{x \in N \enspace | \enspace i \in x\} $. Consider the transversal matroid $M$ on the set $N$ given by the presentation $(A_1, A_2, \dots, A_n, N, N \dots N)$, where there are $\dbinom{ n-1}{2}-n$ copies of $N$ in $\mathcal{A}$. Note that as $M$ is transversal, it is a gammoid. We claim that $\phi(M) = n$.

Note that any subset of $N$ with cardinality  $\dbinom{ n-1}{2}-n$ or less is independent. So to get dependent sets in $M$ we need to look at subsets of cardinality at least $\dbinom{n-1}{2}-n+1$. But by considering such big subsets we will end up with elements that can be sent to $A_i$ for at least $n-2$ different $i$'s in $[n]$. But then we would need subsets of size at least $\dbinom{ n-1}{2}-1$ which would necessarily have access $A_i$ for at least $n-1$ different $i$'s in $[n]$. So the smallest dependent set would be $F_i := \{x \in N \enspace | \enspace i \notin x\}$ for $i \in [n]$. In fact $M$ is the matroid generated by having $\{F_i \enspace | \enspace i \in [n]\}$ as its set of circuits. This could be an easier definition for $M$ (which even works for $n = 3$) but we wanted to show that is in fact a gammoid. We name them $F_i$ and not $C_i$ because they are also the only cyclic flats apart from $E$ itself.

So to check that $M$ is $k$-flat for a given $m$, by proposition \ref{cyclic} we need only to focus on collections of $F_i$ as they are the only non trivial cyclic flats of $M$. Let $A \subseteq [n]$ of size $m > 2$. Consider $\mathcal{C} := \{F_i \enspace | \enspace i \in A\}$. If $B \subseteq A$ with $|B| > 2$, then we have that $F_B = \{x \in N \enspace | \enspace  x\cap B = \emptyset \}$. So $F_B$ is independent and $r(F_B) = |F_B| = \dbinom{n-|B|}{2}$. As $F_i$ is a circuit for every $i \in [n]$, we have $r(F_i) = \dbinom{n-1}{2}-1$. Finally, $r(F_\emptyset) = r(N) = \dbinom{n-1}{2}$. So using lemma \ref{propcomb} we have

\begin{eqnarray*}
\Delta(\mathcal{C}) &=& \sum\limits_{k = 2}^m \dbinom{n-k}{2}\dbinom{m}{k} -m\left(\dbinom{n-1}{2} -1 \right)+\dbinom{n-1}{2}\\
&=& \sum\limits_{k = 0}^m \dbinom{n-k}{2}\dbinom{m}{k} + m+\dbinom{n-1}{2} - \dbinom{n}{2}\\
&=& m+\dbinom{n-1}{2} - \dbinom{n}{2}\\
&=& m-n+1\\
\end{eqnarray*}

So $\Delta(\mathcal{C})>0$ if and only if $m = n$. So $\phi(M) = n$. Note that the same matroid structure defined by circuits can be used to construct a matroid of flatness degree 3. However this matroid would be the matroid generated by graph $K_4$, which Mason proved in \cite{mason1972class} not to be a gammoid. However example 2 in \cite{mason1972class} is an example of a gammoid with flatness degree 3.
\end{proof}

Having differentiated all of this properties we see now that this is an infinite chain of stronger properties
	\[
	\text{Pseudomodular} \supset \text{3-Flat} \supset \text{4-Flat} \supset \dots \supset \text{Totally Flat}
\]

All of the inclusions are strict, even in the class of gammoids, with the examples shown above. Note that all of the properties are conditions on the lattice of flats. Modularity, which is also a condition on the lattice of flats, implies 3-flatness by Proposition \ref{cont}. However, it does not imply 4-flatness (take 4 planes in $\mathbb{R}^3$ which intersect in different lines).

\section{Problems}

A full algebraic matroid consists of an algebraically closed field $K$ where the rank function is the transcendence degree over a subfield $F$ of $K$. Full algebraic matroids and strict gammoids appear to have some interesting similarities, as they are both pseudomodular \cite{dress1987some}, they are closed under contraction but not restriction and gammoids are actually algebraic \cite{mason1972class}. It may be also worth noting that when Ingleton and Main first proved the existence of non-algebraic matroids in \cite{ingleton1975non} they relied on the fact that whenever we have points as in Example \ref{matroidY}, the point $X$ must exist for full algebraic matroids. As strict gammoids are characterized by flatness and pseudomodularity is a step towards 3-flatness we ask the following question 

\begin{prob}
What is the minimum possible flatness degree for a full algebraic matroid?
\end{prob}

If a matroid $M$ has flatness degree $n$ for a large finite $n$, it seems that $M$ must have necessarily a large rank. On the other hand, for rank 3 matroids flatness degree 3 is possible ($K_4$) but not flatness degree 2. However, as there are no non strict gammoids of rank 3 \cite{ingleton1973gammoids}, the only possible flatness degree for a gammoid of rank 3 is $\omega$. So the bounds for flatness degree for gammoids and matroids in general are different.

\begin{prob}
For a given integer $n>1$, give bounds in terms of rank for a matroid $M$ such that $\phi(M) = n$. Give bounds in terms of rank for a gammoid $M$ such that $\phi(M) = n$.
\end{prob}

The fact that these bounds are different may help the widely known open problem

\begin{prob}
Give an algorithm to determine whether or not a given matroid is a gammoid.
\end{prob}

\break

\nocite{*}
\bibliographystyle{plain}
\bibliography{biblo}

\end{document}